\documentclass[12pt]{article}
%\usepackage[printwatermark]{xwatermark}
%bibliography stiles
%\usepackage[]{natbib}
%%%%%%%%%%%%%%%%%%%%%%%%%%
%\newwatermark[allpages,color=black!5,angle=40,scale=6,xpos=-1cm,ypos=1.5cm]{DRAFT}
\usepackage{amsmath,amssymb,amsthm}
\usepackage{tikz}
\usepackage{scalerel} %tochange the size of subscript by using _{\scaleto{  }{2.5pt}}
\usepackage{graphicx,wrapfig,caption,subcaption,color}
 \usepackage{soul}  %to  strikethrough lines use command \st 
 \usepackage[normalem]{ulem} %to strikethrough lines use command \sout
 \usepackage[makeroom]{cancel} % \cancel{} \bcancel{} \xcancel{}  \cancelto{}{}
\setlength{\parskip}{0.5ex}
\usepackage{breqn}
\usepackage[colorlinks=true,citecolor=black,linkcolor=black,urlcolor=blue]{hyperref}

\addtolength{\topmargin}{-2\baselineskip}
\addtolength{\textheight}{4\baselineskip}
\addtolength{\textwidth}{24mm}
\addtolength{\oddsidemargin}{-12mm}
\addtolength{\evensidemargin}{-12mm}

\def\nfrac#1#2{{\textstyle\frac{#1}{#2}}}
\def\dfrac#1#2{\lower0.15ex\hbox{\large$\frac{#1}{#2}$}}

\newtheorem{theorem}{Theorem}[section]
\newtheorem{lemma}[theorem]{Lemma}

\newtheorem{corollary}[theorem]{Corollary}
\numberwithin{equation}{section}

\def\kvec{{\boldsymbol{k}}}

\def\xvec{{\boldsymbol{x}}}

\def\X{{\boldsymbol{X}}}

\def\kmax{k_{\mathrm{max}}}

\def\Hrk{\mathcal{H}_r(\kvec)}

\def\E{\mathbb{E}}
\def\P{\mathbb{P}}

\def\Mk{M(\kvec)}
\def\Mkx{M(\kvec-\xvec)}
\def\Mtkx{M_2(\kvec-\xvec)}
\def\Mtk{M_2(\kvec)}
  %

%\long\def\csg#1{\textcolor{purple}{#1}}
%\long\def\csp#1{\textcolor{purple}{#1}}
%\def\red#1{\textcolor{red}{#1}}
%\def\purple#1{\textcolor{purple}{#1}}  
\def\purple#1{{#1}}  
   
\title{The average number of spanning hypertrees\\ in sparse uniform hypergraphs\thanks{Supported by the Australian Research Council grant DP190100977.}}

\author{Haya S.~Aldosari
   \qquad  Catherine Greenhill \\
\small School of Mathematics and Statistics\\[-0.8ex]
\small UNSW Sydney\\[-0.8ex]
\small Sydney NSW 2052, Australia\\
\small \texttt{h.aldosari@student.unsw.edu.au} \quad \texttt{c.greenhill@unsw.edu.au}
}

\date{9 October 2020}  
\begin{document}
\maketitle

\begin{abstract}
An $r$-uniform hypergraph  $H$ consists of a set of vertices $V$ and a set of edges whose elements are $r$-subsets of $V$. We define  a hypertree to be a connected hypergraph which contains no cycles. A hypertree spans a hypergraph $H$ if it is a subhypergraph of $H$ which contains all vertices of $H$. Greenhill, Isaev, Kwan and McKay (2017) gave an asymptotic formula for the average number of spanning trees in graphs with given, sparse degree sequence. We prove an analogous result for $r$-uniform hypergraphs with given degree sequence $\kvec = (k_1,\ldots, k_n)$.  Our formula holds when $r^5 \kmax^3 = o((kr-k-r)n)$, where $k$ is the average degree and $\kmax$ is the maximum degree.
\end{abstract}
\vspace*{1\baselineskip}

\section{Introduction}

For $n \geq 3$, let 
$\kvec=\kvec(n) =(k_1, \ldots, k_n)$ be a sequence of non-negative integers.  A \emph{hypergraph} is a pair $(V,E)$ where  
$V$ is a set of vertices and $E$ is a multiset of 
multisubsets of $V$. The elements of $E$ are called \emph{edges}. Hence, under this definition we may have an edge containing a 
\emph{loop} if that edge has a vertex of multiplicity more than one. 
\purple{In this paper,} we focus on \emph{simple} hypergraphs: a hypergraph is simple if it has no loops and no repeated edges. 
For a positive integer $r$, we say a hypergraph is \emph{$r$-uniform} if every edge contains exactly $r$ vertices.
\purple{Some authors write ``hyperedge'' instead of edge when $r\geq 3$, but for simplicity we will continue to use ``edge''.}
All hypergraphs in this paper have vertex set $V=\{1,2, \ldots, n\}$.
The aim of this work is to estimate the average number of spanning hypertrees in $r$-uniform hypergraphs with a given degree sequence $\kvec$, when $r$ and the maximum degree are not too large. 
Applications of spanning hypertrees include the hypergraph analogue of the
Steiner tree problem studied by Warme~\cite{warme}.
%
%
%{\color{purple}When every edge of a hypergraph $H$ given a  certain weight, Warme~\cite{warme} apply the concept of  spanning minimum-weight (hyper)tree in $H$ to solve what is called Steiner minimum tree problem: finding a minimum-length interconnection of given finite set of points in a plane, according to some geometric distance metric.}

We first define some terminology and notation. 

 Let $H=(V,E)$ be an $r$-uniform hypergraph. 
A \emph{loop}, or 1-cycle, is an edge which contains a repeated vertex.  
A 2-cycle is a hypergraph with two edges which intersect in at least~two vertices.
For any integer $\ell\geq 3$,
a \emph{cycle} with length $\ell$, or $\ell$-cycle, is a hypergraph with $\ell$ distinct edges which can be labelled as $e_1,\ldots, e_{\ell}$ such that 
there exists distinct vertices $v_1,\ldots, v_\ell$ with $v_i \in e_i \cap e_{i+1}$ for $i=1,\ldots, \ell$ (identifying $e_{\ell+1}$ with $e_1$).  
\purple{A hypergraph is \emph{linear} if each pair of edges overlaps in at most
one vertex. Equivalently, a hypergraph is linear if it contains no 2-cycles.}

A  (\emph{Berge}) \emph{path} in $H$ consists of a sequence $v_0,e_1,v_1,e_2,\ldots,e_{\ell},v_{\ell}$ where $v_0,v_1,\ldots, v_{\ell}$ are
distinct vertices, $e_1,\ldots, e_\ell$ are distinct edges and $ v_{i-1} ,v_i \in e_i$ for all $i=1,\ldots,\ell$. 
A hypergraph  is \emph{connected} if there is a path between every pair of vertices. We say that $H'=(V',E')$ is a \emph{subhypergraph} of $H=(V,E)$ if $V' \subseteq V$ and $E' \subseteq E$. 
 A \emph{hypertree} is a connected hypergraph which contains no cycles. Under this definition, any hypertree must be linear since  any pair of edges which intersect in at least two vertices generates a $2$-cycle. A \emph{spanning hypertree}  in $H$ is a subhypergraph of $H$ which forms 
hypertree containing all vertices of $H$. In other words, a hypergraph $T=(V(T),E(T))$ is a spanning hypertree in $H$ if $T$ is an acyclic, connected subhypergraph of $H$ with $V(T)=V(H)$. 
Clearly $T$ is $r$-uniform if $H$ is $r$-uniform.  We sometimes
abbreviate ``$r$-uniform hypertree'' to ``$r$-hypertree''.  
Note that an $r$-hypertree on $n$ vertices has exactly $\frac{n-1}{r-1}$ edges.

Let $\Hrk$ be the set of all simple $r$-uniform hypergraphs on $V$ with degree sequence $\kvec$. 
Denote by $\tau^{(r)}_{\kvec}$ the number of spanning $r$-hypertrees in a hypergraph $H$ chosen uniformly at random from $\Hrk$. If $\kvec$ is a regular degree sequence with $k_j=k$ for all $j\in [n]$ then we write $\tau^{(r)}_{n,k}$ for \purple{$\tau^{(r)}_{\kvec}$}. 
Define
\[
k=\dfrac{1}{n}\sum\limits_{j=1}^{n} k_i , \qquad \hat{k} =\bigg(\prod_{i=1}^{n} k_i\bigg)^{1/n}%^{\nfrac{1}{n}}
\]
and write
\[ F^{(r)}(k,\hat{k})= \frac{(k-1)^{\nfrac{1}{2}} (r-1) }{n (kr-k-r)^{\nfrac{r+1}{2(r-1)}} } \left( \frac{ \hat{k} \,(r-1)^{k/r} (k-1)^{k-1} }{k^{\nfrac{kr-k}{r}} (kr-k-r)^{\nfrac{kr-k-r}{r(r-1) }}}\right)^n \ .
\]

\newpage
Our main result is stated below. 

\begin{theorem}\label{Espanninghypertree}
For $n\geq 3$, let $r=r(n) \geq 3$ be an integer number, and $\kvec=\kvec(n)=(k_1,\ldots,k_n)$ be a sequence of 
positive integers with maximum $\kmax$.   
Assume that $r$ divides $kn$ and $r-1$ divides $n-1$ for infinitely many values of $n$, and
perform asymptotics with respect to $n$ only along these values.
If $r^5 \, \kmax^3=o((kr-k-r)n)$ 
then the average number of spanning hypertrees in an $r$-uniform hypergraph with degree sequence $\kvec$ is
\begin{align*}
 & \E \, \tau^{(r)}_{\kvec} \\
&= F^{(r)}(k,\hat{k})\, 
%\frac{(k-1)^{\nfrac{1}{2}} (r-1) }{n (kr-k-r)^{\nfrac{r+1}{2(r-1)}} } \left( \frac{ \hat{k} \,(r-1)^{k/r} (k-1)^{k-1} }{k^{\nfrac{kr-k}{r}} (kr-k-r)^{\nfrac{kr-k-r}{r(r-1) }}}\right)^n \\[5pt]
 %& \qquad {} \times\, 
\, \exp\left(\frac{kr-r-1}{2 (k-1) } 
- \frac{kr-r-2k+1}{2k(k-1)^2 n} \sum\limits_{i=1}^{n} (k_i -k)^2 
 +   O\left(\frac{r^5 \, \kmax^3}{(kr-k-r) n}\right) \right).
\end{align*}
\end{theorem}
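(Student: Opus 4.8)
The plan is to compute $\E\,\tau^{(r)}_{\kvec}$ by switching the order of summation: instead of averaging a count of spanning hypertrees over random hypergraphs, we sum over all labelled $r$-uniform spanning hypertrees $T$ on $V$ the probability that a random $H\in\Hrk$ contains $T$ as a subhypergraph. Thus
\[
\E\,\tau^{(r)}_{\kvec} \;=\; \frac{1}{|\Hrk|}\sum_{T}\bigl|\{H\in\Hrk : E(T)\subseteq E(H)\}\bigr|
\;=\; \sum_{T}\P\bigl(E(T)\subseteq E(H)\bigr),
\]
where $T$ ranges over all $r$-hypertrees with vertex set $V$ (there are finitely many since $r-1\mid n-1$ forces exactly $(n-1)/(r-1)$ edges). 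I would first record an exact or asymptotic formula for the total number of such labelled $r$-hypertrees — a hypergraph generalisation of Cayley's formula — and for the "degree-refined" count, i.e. the number of $r$-hypertrees in which vertex $i$ has a prescribed degree $d_i$. The generating-function/Prüfer-type machinery for counting $r$-hypertrees by degree sequence should already be available in the literature; the key quantity is a sum over admissible degree sequences $\dvec=(d_1,\dots,d_n)$ of (number of $r$-hypertrees with degree sequence $\dvec$) times (probability a random $H$ with degree sequence $\kvec$ contains a fixed such hypertree).

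The second ingredient is the inclusion probability $\P(E(T)\subseteq E(H))$ for a fixed hypertree $T$ with degree sequence $\dvec$. This is a subgraph-containment probability in the model of random simple $r$-uniform hypergraphs with degree sequence $\kvec$, and by symmetry it depends on $T$ only through $\dvec$. To estimate it I would pass to the configuration model (pairings of $\sum_i k_i$ half-edges into $n$-tuples... rather $r$-tuples), where the analogous probability factorises cleanly as a ratio of falling factorials in the $k_i$ and the $d_i$, and then multiply by the ratio $\P(\text{simple})$-type correction. Here I would invoke, or re-derive in the sparse regime $r^5\kmax^3 = o((kr-k-r)n)$, an enumeration result for simple $r$-uniform hypergraphs with given degree sequence — this plays the role that McKay–Wormald-type formulae play in the graph case — so that the correction factors contribute the $\exp\bigl(\tfrac{kr-r-1}{2(k-1)}-\cdots\bigr)$ terms. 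Combining, $\E\,\tau^{(r)}_{\kvec}$ becomes a single sum over degree sequences $\dvec$ of a product of: the degree-refined hypertree count, a product over $i$ of ratios like $(k_i)_{d_i}/(\text{something})$, and a global simplicity correction.

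The final and, I expect, hardest step is the asymptotic evaluation of that sum over $\dvec$ by the Laplace / saddle-point method. After taking logarithms, the summand is $\exp$ of a function of $\dvec$ that is (to leading order) separable and strictly concave in each $d_i$; its maximiser $d_i^{\*}$ is close to the "natural" value $(r-1)k_i/(\sum_j k_j)\cdot(n-1)/(r-1)$, i.e.\ proportional to $k_i$. I would expand to second order about this maximiser, bound the tail contributions using the concavity and the sparsity hypothesis, and carry out the resulting Gaussian sum — taking care that the $d_i$ are integer-valued and weakly coupled through the constraint $\sum_i d_i = (n-1)r/(r-1)$, which contributes a $1/\sqrt{\text{variance}}$ factor and is responsible for the $(k-1)^{1/2}$ and $(kr-k-r)^{-(r+1)/(2(r-1))}$ prefactors in $F^{(r)}(k,\hat k)$. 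The $\exp$-correction in the theorem, in particular the $-\frac{kr-r-2k+1}{2k(k-1)^2 n}\sum_i(k_i-k)^2$ term, should emerge from the third-order Taylor term and from the discrepancy between $\dvec^{\*}$ and the exactly-proportional sequence. The main obstacle is controlling all error terms uniformly so that they collapse into the single $O\!\bigl(r^5\kmax^3/((kr-k-r)n)\bigr)$ bound; this requires careful tracking of $r$-dependence throughout, since $r$ is allowed to grow with $n$, and the interplay between the $r$-hypertree enumeration error, the simplicity-correction error, and the Laplace-method error is delicate.
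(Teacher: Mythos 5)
Your set-up coincides with the paper's: both write $\E\,\tau^{(r)}_{\kvec}=\sum_T\P(E(T)\subseteq E(H))$, group the hypertrees by their degree sequence $\dvec$, use a Bacher/Pr\"ufer-type count of $r$-hypertrees with prescribed degrees, and feed in a containment-probability estimate coming from an enumeration theorem for sparse simple $r$-uniform hypergraphs with given degrees (the paper invokes its own prior result, Theorem~\ref{AldGre18}, for exactly this). One caution on that step: the claim that the containment probability depends on $T$ only through $\dvec$ ``by symmetry'' is false as an exact statement --- in the graph case McKay's formula genuinely depends on the edges of $X$, and the same is true exactly for hypergraphs; what is true, and is a nontrivial feature of the asymptotic formula in Theorem~\ref{AldGre18}, is that the dependence on the edges of $T$ is absorbed into the error term. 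Where you genuinely diverge is the final summation over $\dvec$: you propose a Laplace/saddle-point analysis around the maximiser $d_i^{*}\propto k_i$, with the prefactors of $F^{(r)}(k,\hat k)$ emerging from the Gaussian determinant. The paper avoids this entirely: after writing $(k_i)_{x_i}/(x_i-1)!=k_i\binom{k_i-1}{x_i-1}$ and normalising by $\binom{(k-1)n}{t-1}$ (an exact Vandermonde identity), the sum over degree sequences becomes \emph{exactly} an expectation $\E\,e^{g(\X)}$ over a multivariate hypergeometric vector $\X$; a bounded-differences concentration inequality (Lemma~\ref{Eetofn}) then gives $\E\,e^{g(\X)}=e^{\E g(\X)+o(1)}$, and $\E g(\X)$ is computed from closed-form factorial moments. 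The prefactor $F^{(r)}(k,\hat k)$ comes from Stirling's formula applied to a deterministic ratio of factorials, not from a Gaussian sum. Your route is plausible in principle, but it is substantially more delicate (integrality, the coupling constraint $\sum_i d_i=rt$, uniform tail bounds with $r$ growing), and you have not carried out any of it; the paper's exact-expectation reformulation is what makes the error control collapse to the single term $O\bigl(r^5\kmax^3/((kr-k-r)n)\bigr)$ with comparatively little work.
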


 Noting that this theorem holds only when $r\geq 3$ and does not capture the correct asymptotic
expression in the case of graphs ($r=2$):  the factor $F^{(2)}(k,\hat{k})$ is correct but the
exponential factor is different.  
%This reflects the fact that simplicity for hypergraphs is not equivalent to conditioning on no 1-cycles or 2-cycles, as is the case for graphs.
\purple{Simplicity for graphs is equivalent to conditioning on no 1-cycles and no 2-cycles. For $r$-uniform hypergraphs with $r\geq 3$, a simple hypergraph may contain 2-cycles when two edges overlap in more than one vertex. Hence the probability that a random hypergraph is simple takes a different form to the corresponding probability for graphs. }
Furthermore, the conclusion of Theorem~\ref{Espanninghypertree} also holds
if some entries of $\kvec$ equal zero, as then both $\tau_{\kvec}^{(r)}$ and $F^{(r)}(k,\hat{k})$ equal zero.

For $k$-regular $r$-uniform hypergraphs, we immediately obtain the following  corollary.
 %which  proves a {\color{blue}{conjecture in~\cite{Lia18}}}.

\begin{corollary}
For $n\geq 3$, let $r=r(n) \geq 3$ and $k = k(n)$ be positive integers.  Assume that $r$ divides $kn$ and
$r-1$ divides $n-1$ for infinitely many values of $n$, and perform asymptotics with respect to $n$ only along
these values.
If $r^5 \, k^3=o((kr-k-r)\, n)$
then the average number of spanning hypertrees in an $r$-uniform $k$-regular hypergraph is  
\begin{align*}
 \E \, \tau^{(r)}_{n,k} &= 
%\frac{(k-1)^{\nfrac{1}{2}} (r-1) }{n\, (kr-k-r)^{\nfrac{r+1}{2(r-1)}} } \, 
%\left( \frac{(r-1)^{k/r} (k-1)^{k-1} }{k^{\nfrac{kr-k-r}{r}} (kr-k-r)^{\nfrac{kr-k-r}{r(r-1) }}}\right)^n  
%\\[5pt] & \quad \qquad {} \times 
F^{(r)}(k,k)\, \exp\left( \frac{kr-r-1}{2(k-1)}
 +  O\left(\frac{r^5 \, k^3}{(kr-k-r) n} \right) \right).
\end{align*}
\end{corollary}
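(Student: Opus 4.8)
The plan is to evaluate $\E\,\tau^{(r)}_{\kvec}$ by linearity of expectation over all spanning $r$-hypertrees. Each such hypertree has $m=\tfrac{n-1}{r-1}$ edges and degree sequence summing to $rm$, so
\[
\E\,\tau^{(r)}_{\kvec}=\frac{1}{|\Hrk|}\sum_{T}\bigl|\{H\in\Hrk:E(T)\subseteq E(H)\}\bigr|,
\]
the sum running over all $r$-hypertrees $T$ on $V$. Fix $T$ with degree sequence $\dvec=\dvec(T)$ (only $\dvec\le\kvec$ contributes). Since $H$ is simple, $H\in\Hrk$ contains $T$ exactly when $H\setminus E(T)$ is a simple $r$-uniform hypergraph with degree sequence $\kvec-\dvec$ that contains none of the $m$ edges of $T$; by inclusion--exclusion this count is $|\mathcal{H}_r(\kvec-\dvec)|$ times an edge-avoidance factor which, because $T$ is linear and $\kvec-\dvec$ is sparse, is $1+O\!\bigl(r^5\kmax^3/((kr-k-r)n)\bigr)$ uniformly over $T$. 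I will then invoke the asymptotic enumeration formula for $|\Hrk|$ valid in this range (in terms of $\Mk=\sum_i k_i$ and $\Mtk=\sum_i k_i(k_i-1)$; note that for $r\ge3$ simplicity conditions only on no loops and no repeated edges, not on $2$-cycles) and apply it to both $|\mathcal{H}_r(\kvec-\dvec)|$ and $|\Hrk|$.

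Next I will establish, via a generalized Pr\"ufer encoding (equivalently, from the functional equation $y=x\exp(y^{r-1}/(r-1)!)$ for the exponential generating function of vertex-rooted $r$-hypertrees), that the number of $r$-hypertrees on $V$ with degree sequence $\dvec$ is
\[
N(\dvec)=\frac{(n-1)!\,(m-1)!}{((r-1)!)^{m}\,m!\,\prod_{i=1}^{n}(d_i-1)!},
\]
which sums over $\dvec$ (with $\sum_i(d_i-1)=m-1$) to the total $\tfrac{(n-1)!}{((r-1)!)^{m}m!}\,n^{m-1}$ of $r$-hypertrees. Grouping the expectation by $\dvec$ gives $\E\,\tau^{(r)}_{\kvec}=\sum_{\dvec}N(\dvec)\,R(\dvec)\,\bigl(1+O(\cdot)\bigr)$ with $R(\dvec)=|\mathcal{H}_r(\kvec-\dvec)|/|\Hrk|$. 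Substituting the enumeration formula, $R(\dvec)$ is a factor not depending on $\dvec$ times $\prod_i\tfrac{k_i!}{(k_i-d_i)!}$ times a slowly-varying $\dvec$-dependent correction $A(\dvec)$ (arising from converting falling factorials to powers and from the ratio of simplicity probabilities, the latter involving $M_2(\kvec-\dvec)$ versus $\Mtk$). Combining with $N(\dvec)$ and using $\tfrac{k_i!}{(k_i-d_i)!\,(d_i-1)!}=k_i\binom{k_i-1}{d_i-1}$, the summand becomes a $\dvec$-free factor times
\[
\Bigl(\prod_{i=1}^{n}k_i\Bigr)\prod_{i=1}^{n}\binom{k_i-1}{d_i-1}\;=\;\hat k^{\,n}\prod_{i=1}^{n}\binom{k_i-1}{d_i-1},
\]
times $A(\dvec)$.

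The leading order then follows from the Vandermonde identity $\sum_{\dvec}\prod_i\binom{k_i-1}{d_i-1}=\binom{\Mk-n}{m-1}$ over $\dvec$ with $\sum_i(d_i-1)=m-1$; applying Stirling's formula to this binomial coefficient (here $\Mk-n=(k-1)n$, and $kr-k-r$ enters as the numerator of the complementary proportion $1-\tfrac{m-1}{\Mk-n}=\tfrac{kr-k-r}{(k-1)(r-1)}$ in that expansion) and to the $\dvec$-free factor produces $F^{(r)}(k,\hat k)$. To evaluate the remaining sum $\sum_{\dvec}\prod_i\binom{k_i-1}{d_i-1}A(\dvec)$ I will use that, under the probability measure on $\{\dvec:\sum_i(d_i-1)=m-1\}$ proportional to $\prod_i\binom{k_i-1}{d_i-1}$ — a multivariate hypergeometric law — the coordinates concentrate around $d_i^{*}-1\approx\tfrac{(k_i-1)(m-1)}{\Mk-n}$; expanding $\log A(\dvec)$ to second order about $\dvec^{*}$ and computing the relevant means and variances (these, together with the sub-leading Stirling terms, bring in $\sum_i(k_i-k)^2$) produces the factor $\exp\!\bigl(\tfrac{kr-r-1}{2(k-1)}-\tfrac{kr-r-2k+1}{2k(k-1)^2n}\sum_i(k_i-k)^2\bigr)$. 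Multiplying everything out and simplifying gives the claimed formula; the regular case follows at once.

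The main obstacle is this last asymptotic evaluation: the Vandermonde identity only delivers the leading term, and recovering the exponential constant and the $\sum_i(k_i-k)^2$ correction to within $O\!\bigl(r^5\kmax^3/((kr-k-r)n)\bigr)$ requires a careful Laplace-type analysis of the hypergeometric measure that is uniformly valid under only $r^5\kmax^3=o((kr-k-r)n)$, carried out while bookkeeping all the $\exp(\Theta(n))$ factors generated by the two applications of the enumeration formula. A secondary difficulty is controlling the edge-avoidance factor (that $H\setminus E(T)$ must avoid the $m$ specific edges of $T$) uniformly over all hypertrees with a given degree sequence; the linearity of $T$ together with a short switching/inclusion--exclusion argument should let this be absorbed into the error term.
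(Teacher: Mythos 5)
In the paper this statement is not proved by a fresh computation at all: it is the immediate specialization of Theorem~\ref{Espanninghypertree} to the constant sequence $k_i=k$, under which $\hat k=k$ and the term $\sum_{i}(k_i-k)^2$ vanishes, leaving exactly $F^{(r)}(k,k)\exp\bigl(\tfrac{kr-r-1}{2(k-1)}+O(\cdot)\bigr)$. You have instead sketched a from-scratch proof of the general theorem. Your ingredients do track the paper's proof of that theorem quite closely: the containment probability via enumeration with forbidden edges is the content of Theorem~\ref{AldGre18} (from~\cite{AldGre18}); your $N(\dvec)$ is Bacher's formula~(\ref{Tx}) in disguise (since $(n-1)!/m=(r-1)(n-2)!$); and your reorganization into $\hat k^{\,n}\prod_i\binom{k_i-1}{d_i-1}$ normalized by $\binom{(k-1)n}{t-1}$, viewed as a multivariate hypergeometric law, is precisely Lemma~\ref{tauk2}. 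So the route is right, just vastly longer than necessary for the regular corollary.

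The genuine gap is that you stop exactly where the work is. You explicitly defer ``the main obstacle,'' namely evaluating $\sum_{\dvec}\prod_i\binom{k_i-1}{d_i-1}A(\dvec)$ to within the stated error, and this step does not trivialize in the regular case: even when $k_i=k$ for all $i$, the hypertree degree sequences $\dvec$ still vary, so the $\dvec$-dependent correction (the paper's $\lambda(\xvec)$) must still be averaged. The paper does this not by a second-order Laplace expansion about the mode, but by (i) a bounded-differences concentration bound (Lemma~\ref{Eetofn}, from~\cite{GIKM17}) showing $\E e^{g(\X)}=e^{\E g(\X)}\exp\bigl(O(r^3\kmax^2/((kr-k-r)^2n))\bigr)$ because adjacent degree sequences change $g$ by only $O(r^2\kmax/((kr-k-r)n))$, and (ii) the exact factorial-moment identity~(\ref{EX-1}) for the hypergeometric distribution, which evaluates $\E g(\X)$ in closed form and produces the constant $\tfrac{kr-r-1}{2(k-1)}$. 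Without supplying an argument of this kind (and a uniform treatment of the edge-avoidance factor, which in the paper is packaged inside the error term $\beta$ of Theorem~\ref{AldGre18}), your proposal is an outline rather than a proof. Given that the paper's Theorem~\ref{Espanninghypertree} is available, the honest proof of the corollary is one sentence; if you insist on a self-contained derivation, you must actually carry out the concentration and moment computations you have labelled as obstacles.
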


\subsection{Background}
The number of spanning trees in a graph $G$, also called the \emph{complexity} of $G$, is a very
well-studied parameter.
Greenhill et al.~\cite{GIKM17} gave an asymptotic formula for the average number of spanning trees in 
graphs with a given degree sequence, as long as the degree sequence is sufficiently sparse.  
This completed a sequence of papers beginning with McKay~\cite{McKay81}: see the history described in~\cite{GIKM17}.

There are several different definitions of hypertrees in the literature.
Our definition of hypertrees matches the definition given by Boonyasombat in~\cite{Bo84}. 
Siu~\cite{Siu02} gave a family of definitions of hypertrees, parameterised by the amount of
overlap allowed between edges.   Our definition of hypertrees matches what Siu calls 
``traditional hypertrees''~\cite[Section~1.2.1]{Siu02}: the other structures he studies
contain 2-cycles, as he allows edges to overlap in more than one vertex. 

Goodall and Mier~\cite{GoodMier11} investigated spanning trees in (non-random) 3-uniform hypergraphs, 
establishing some necessary conditions and some sufficient conditions for the existence
of a spanning tree.   They also proved that any Steiner triple system on $n$ vertices
has at least $\Omega((n/6)^{n/12})$ spanning trees~\cite[Theorem~4]{GoodMier11}.
A Steiner triple system can be viewed as a 3-uniform hypergraph such that every pair of distinct vertices
is contained in exactly one edge.
As far as we know, there is no prior work on the asymptotic number of spanning hypertrees in random 
uniform hypergraphs. %with given degree sequence. 

%\subsection{Hypertrees}

We say that a sequence of $n$ positive integers $\xvec=(x_1,\ldots,x_n)$ is a \emph{suitable} degree sequence for  a hypertree in $ \Hrk$ if $x_i \leq k_i$ for all $i\in [n]$ and $\sum_{i=1}^n x_i = rt$
where $t= \frac{n-1}{r-1}$ is the number of edges in a hypertree on $[n]$. 
Denote by $\mathcal{T}$ the set of all  $r$-hypertrees on $n$ vertices, and for a suitable degree
sequence $\xvec$, define
\[\mathcal{T_{\xvec}}=\{T\in \mathcal{T} : T \mbox{ has degree sequence } \xvec  \}.
\]
Then
\[
|\mathcal{T}| = \frac{(n-1)! \, n^{t-1} }{t! \, (r-1)!^t}, 
\]
generalising Cayley's formula.
\purple{This result was given by Selivanov~\cite{selivanov}, see also~\cite{kolchin}.}
%This result was given by Bolian~\cite{Bolian88}, see also~\cite[equation (1)]{ZhiBol01}.
Alternative proofs using generalisations of Pr{\" u}fer codes were given in~\cite{lavault,shannigrahi,sivasubramanian2006spanning}.
A more general result was proved by Siu~\cite[Theorem~2.1]{Siu02} 
using a different definition of hypertrees, where edges are added
consecutively and a new edge may overlap a preceding edge in $d$ vertices.  Our definition of hypertree
corresponds to the case $d=1$.

For a suitable degree sequence $\xvec$, 
Bacher~\cite[Theorem 1.1]{Bacher11} proved that
 \begin{equation}\label{Tx}
 |\mathcal{T}_{\xvec} |=\frac{(r-1) (n-2)!}{(r-1)!^t \, \prod\limits_{i=1}^{n}(x_i -1)!}  .
 \end{equation}
This generalises a formula given by Moon~\cite{moon} in the case of graphs.

%This corresponds to a $r$-hypertree on with $\nfrac{n-1}{r-1}$ edges which spans a hypergraph in $\Hrk$.

\subsection{Main ideas}\label{s:outline}

We write $(a)_b$  for the falling factorial $a(a-1) \cdots (a-b+1)$.  For any positive integer $a$, write $[a] = \{ 1,2,\ldots, a\}$.
Define $M(\kvec)$ as the sum of entries of $\kvec$, and $\Mtk = \sum_{i=1}^{n} (k_i)_2$. 
Suppose that $r$ divides $\Mk$ for infinitely many values of $n$ and take $n$ to infinity along these values. 
In~\cite{AldGre18}, we found an asymptotic formula for the probability that a random hypergraph from $\Hrk$ contains a given $r$-uniform hypergraph. 

\begin{theorem}\emph{\cite[Corollary 1.2]{AldGre18}}\,  \label{AldGre18}
For $n\geq 3$ and $r=r(n) \geq 3$, let $\kvec$ and $\kmax$ be defined as above. 
Let $X=X(n)$ be a given simple $r$-uniform hypergraph with degree sequence $\xvec$ and $t$ \purple{edges},
where $x_i \leq k_i$ for all $i=1,2,\ldots,n$. 
Define \[\beta =\frac{r^4 \kmax^3}{\Mkx} \, + \,\frac{t \, \kmax^3}{\Mkx^2} \, +\, 
  \frac{r\,t\, \kmax^4}{\Mkx^3}, \]
and assume that $\beta=o(1)$. Then 
the probability that a random hypergraph from $\Hrk$ contains every edge of $X$ is
\[
\frac{(\Mk/r)_{t} \, r!^{t}\, \prod_{i=1}^{n} (k_i)_{x_i} }{(\Mk)_{rt}}\,
  \exp\left(\frac{r-1}{2}\,\left(\frac{\Mtk}{\Mk}-  \frac{\Mtkx}{\Mkx} \right)+ O\left( \beta \right)\right). 
 \]
\end{theorem}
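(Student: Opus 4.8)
The plan is to write the probability as a ratio of two hypergraph counts and to evaluate each count by the configuration (pairing) model. A hypergraph $H$ drawn uniformly from $\Hrk$ contains every edge of $X$ if and only if the hypergraph $H\setminus X$ obtained by deleting the edges of $X$ is a simple $r$-uniform hypergraph with degree sequence $\kvec-\xvec$ that shares no edge with $X$; this correspondence is a bijection because $H$ is simple. Writing $|\Hrkxx|$ for the number of such $X$-avoiding hypergraphs, this yields
\[ \P(X \subseteq H) = \frac{|\Hrkxx|}{|\Hrk|}. \]
It therefore suffices to estimate each of $|\Hrk|$ and $|\Hrkxx|$ to a multiplicative accuracy good enough that the claimed exponential factor and the $O(\beta)$ error survive in the quotient.

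First I would set up the configuration model. For a degree sequence $\dvec$ with $M(\dvec)$ divisible by $r$, give vertex $i$ a set of $d_i$ distinct points, and call a partition of the $M(\dvec)$ points into $M(\dvec)/r$ unordered cells of size $r$ a \emph{configuration}; there are $P(\dvec)=M(\dvec)!/\big((M(\dvec)/r)!\,r!^{M(\dvec)/r}\big)$ of them. Projecting each cell to the multiset of vertices owning its points gives a multi-hypergraph, and the projection is \emph{simple} exactly when no cell contains two points of the same vertex (no loop) and no two cells project to the same $r$-set (no repeated edge); note that for $r\geq 3$ two distinct cells may share two or more vertices, so such overlaps are permitted. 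Each simple hypergraph with degrees $\dvec$ arises from exactly $\prod_i d_i!$ configurations, so
\[ |\mathcal{H}_r(\dvec)| = \frac{P(\dvec)}{\prod_i d_i!}\,\P(\text{a uniformly random configuration is simple}), \]
and $|\Hrkxx|$ is given by the same formula with the extra event that no cell projects to an edge of $X$.

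The crux, and the step I expect to be the main obstacle, is the asymptotic estimate of the simplicity probability to the required precision, uniformly over the degree sequence. In the sparse regime the number of loops is asymptotically Poisson, and the method of moments (or, equivalently, a switching argument replacing a loop by a good pairing) gives $\P(\text{simple})=\exp(-\mu_{\mathrm{loop}}+\text{(lower order)})$, where a pair of points of the same vertex lands in a common cell with probability $\approx (r-1)/M(\dvec)$ and there are $\frac{1}{2}M_2(\dvec)$ such pairs, so $\mu_{\mathrm{loop}}=\frac{r-1}{2}\frac{M_2(\dvec)}{M(\dvec)}(1+o(1))$; this produces the factor $\exp\!\big(-\tfrac{r-1}{2}M_2(\dvec)/M(\dvec)\big)$. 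The delicate point is to show that, for $r\geq 3$, the expected number of repeated edges, all higher moment corrections, and the probability that some cell projects onto a forbidden edge of $X$ are each of \emph{strictly} smaller order than the loop term rather than merely $o(1)$, so that they contribute only $e^{O(\beta)}$. This is exactly where the three terms of $\beta$ arise (the $r^4\kmax^3/M(\kvec-\xvec)$ term from the base simplicity estimate and the two $t$-dependent terms from the defect structures and edge-avoidance involving $X$), and it is what distinguishes the result from the graph case $r=2$, where double edges contribute to the exponential at the same order as loops.

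Finally I would take the ratio. The combinatorial prefactors combine cleanly:
\[ \frac{P(\kvec-\xvec)/\prod_i (k_i-x_i)!}{P(\kvec)/\prod_i k_i!} = \frac{(\Mk/r)_t \, r!^t \, \prod_i (k_i)_{x_i}}{(\Mk)_{rt}}, \]
which is precisely the stated leading factor, while the two loop factors give
\[ \exp\!\left(\frac{r-1}{2}\left(\frac{\Mtk}{\Mk}-\frac{\Mtkx}{\Mkx}\right)\right), \]
the stated exponential. The remaining work is routine bookkeeping: verifying that the error contributions from the numerator and denominator combine to $O(\beta)$, using $M(\kvec-\xvec)=\Mk-rt$ and $x_i\le k_i$ to control the changes in $M_2$ and in the loop mean when passing from $\kvec$ to $\kvec-\xvec$.
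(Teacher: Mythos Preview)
The paper does not contain a proof of this statement at all: Theorem~\ref{AldGre18} is quoted verbatim from \cite[Corollary~1.2]{AldGre18} and used as a black box in the present paper, so there is no ``paper's own proof'' to compare your proposal against. Your sketch is a plausible outline of how such a result is established (and is close in spirit to the switching/configuration arguments in \cite{AldGre18}), but you should be aware that the heavy lifting you flag --- controlling repeated edges, higher moments, and the $X$-avoidance contribution so that they combine to $O(\beta)$ --- is precisely the content of that earlier paper and is not something the current paper attempts or needs to reproduce.
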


We follow the approach used by
Greenhill et al.~\cite{GIKM17} in the graph case.
For a given $r$-uniform hypertree $T$ on vertex set $[n]$, we can apply this result to find the 
probability that a random element of $\mathcal{H}_r(\kvec)$ contains $T$.
By summing over all hypertrees with a given degree sequence $\xvec$, we obtain the expected number
of spanning hypertrees with degree sequence $\xvec$ in a random element of $\mathcal{H}_r(\kvec)$.
Finally, by summing over all suitable degree sequences we complete the proof of Theorem~\ref{Espanninghypertree}.

Observe that the asymptotic formula given in Theorem~\ref{AldGre18} depends only on $r$, $\kvec$ and
$\xvec$ (up to the stated error term), and not on the specific edges of $X$.
In contrast, the corresponding formula of McKay~\cite[Theorem 4.6]{Mck85}
which was used by Greenhill et al.~\cite{GIKM17} in their enumeration of the average number of
spanning trees in graphs with given degrees, has terms which depend on the edges of $X$.  
This leads to differences in the calculation in the hypergraph case, as we do not have to average
over all trees with a given degree sequence as in~\cite{GIKM17}.  

Since all simple graphs are linear, it is possible that the asymptotic enumeration for the
expected number of spanning hypertrees in simple linear uniform hypergraphs will generalise the
formula for graphs.  This will be investigated in future work.

%Also, consider that all simple graphs are linear while this is not the case for every simple uniform hypergraph. As a result, when $r=2$, the terms within exponential expression in~\cite[Theorem 1.1]{GIKM17} is different from the corresponding terms in Theorem~\ref{Espanninghypertree}. Also, graph requires $\kmax^4 \leq (k-2)n$ to estimate $\tau_{\kvec}^{(2)}$ while we get  $\kmax^3=o((k-1)n)$ . However, the average of spanning trees can be generalised by estimating spanning hypertrees in linear uniform hypergraphs, and this is our future work.

\section{The proof}\label{s:proof}

Recall that $k$ is the average of the elements of $\kvec$. 
Suppose that $\xvec = (x_1,\ldots, x_n)$ is a suitable degree sequence.
%Then
  %\[ M(\kvec) =\sum\limits_{i=1}^{n} k_i \,= kn, \quad M(\xvec)= rt, \quad \Mkx=kn - rt.
  %\]
The next result follows by direct application of Theorem~\ref{AldGre18} (proof omitted),
using the fact that $M(\kvec) =kn$ and $\Mkx=kn-rt$.

 \begin{corollary}\label{hypertreeassubhypergraph}
Let $n\geq 3$,  $r=r(n) \geq 3$ be integers and $\kvec =\kvec(n)$ be a sequence of positive integers.  
Let $T$ be an $r$-hypertree with degree sequence $\xvec$ and $t=\nfrac{n-1}{r-1}$ edges, where $x_i \leq k_i$ for all $i\in [n]$. Define
  \begin{align*}
  &\lambda_0 = \frac{r-1}{2 kn}  \sum\limits_{i=1}^{n} (k_i)_2 ,\qquad \lambda(\xvec) = \frac{(r-1)^2}{2(kr-k-r)n +2r}  \sum\limits_{i=1}^{n} (k_i-x_i)_2 .
  \end{align*}
If $r^5 \, \kmax^3=o((kr-k-r)n)$ then
the probability that a random hypergraph from $\Hrk$ contains $T$ is
\[
\P(T) =\frac{(kn/r)_{t} \, r!^{t}\, \prod_{i=1}^{n} (k_i)_{x_i} }{(kn)_{rt}}\,
  \exp\left(\lambda_0 -  \lambda(\xvec)+ O\left( \frac{r^5 \kmax^3}{(kr-k-r)n} \right)\right). \\
 \]
\end{corollary}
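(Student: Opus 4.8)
The plan is to apply Theorem~\ref{AldGre18} directly with $X = T$, so that $X$ has degree sequence $\xvec$ and $t$ edges. The first step is to record the two substitutions that specialise the general formula to this setting. By definition of $k$ we have $\Mk = \sum_{i=1}^n k_i = kn$, and since $\xvec$ is suitable we have $\sum_{i=1}^n x_i = rt$, so
\[
\Mkx \;=\; kn - rt \;=\; \frac{kn(r-1) - r(n-1)}{r-1} \;=\; \frac{(kr-k-r)n + r}{r-1},
\]
using $t = \nfrac{n-1}{r-1}$. I would also observe that the hypothesis $r^5\kmax^3 = o((kr-k-r)n)$ is vacuous unless $kr-k-r$ is eventually positive; since $r \geq 3$ this forces $k \geq 2$, hence $kr-k-r = k(r-1) - r \geq r - 2 \geq 1$. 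In particular $\Mkx \geq (kr-k-r)n/(r-1) > 0$, and all the falling factorials below are well defined because $x_i \leq k_i$ for every $i$.

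The main step is to check that the error parameter $\beta$ of Theorem~\ref{AldGre18} satisfies
\[
\beta \;=\; O\!\left( \frac{r^5\kmax^3}{(kr-k-r)n} \right),
\]
which in particular gives $\beta = o(1)$, so that Theorem~\ref{AldGre18} applies. Here I would bound each of the three summands of $\beta$ using the inequality $\Mkx \geq (kr-k-r)n/(r-1)$, together with the crude estimates $t \leq n/(r-1)$ and $rt = r(n-1)/(r-1) \leq 2n$ (valid for $r \geq 2$), and the fact that the hypothesis forces $\kmax \leq ((kr-k-r)n)^{1/3}$ eventually. The first summand is already at most $r^4\kmax^3(r-1)/((kr-k-r)n) < r^5\kmax^3/((kr-k-r)n)$, while the second and third summands are also $O\big(r^5\kmax^3/((kr-k-r)n)\big)$, indeed of strictly smaller order. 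Hence $\beta$ is governed by its first term, which is both $o(1)$ and $O(r^5\kmax^3/((kr-k-r)n))$. I expect this bookkeeping to be the only real obstacle: one must match the three pieces of $\beta$ against a single target error term while keeping the $r$, $\kmax$ and $n$ dependencies under control; the remaining steps are purely algebraic.

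Finally I would simplify the explicit expression furnished by Theorem~\ref{AldGre18}. Substituting $\Mk = kn$ into the prefactor $(\Mk/r)_t\, r!^t\, \prod_{i=1}^n (k_i)_{x_i}/(\Mk)_{rt}$ yields exactly the prefactor in the claimed formula for $\P(T)$. For the exponent, $\frac{r-1}{2} \cdot \frac{\Mtk}{\Mk} = \frac{r-1}{2kn}\sum_{i=1}^n (k_i)_2 = \lambda_0$ directly, while
\[
\frac{r-1}{2} \cdot \frac{\Mtkx}{\Mkx} \;=\; \frac{(r-1)^2 \sum_{i=1}^n (k_i - x_i)_2}{2\big( (kr-k-r)n + r \big)} \;=\; \lambda(\xvec),
\]
using the expression for $\Mkx$ found above. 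Combining these identities with the bound on $O(\beta)$ from the previous step gives the stated formula for $\P(T)$, completing the argument.
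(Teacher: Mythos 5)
Your proposal is exactly the route the paper intends: the paper explicitly omits this proof, describing it as a direct application of Theorem~\ref{AldGre18} with $\Mk=kn$ and $\Mkx=kn-rt$, and your identification of the prefactor, of $\lambda_0=\frac{r-1}{2}\cdot\frac{\Mtk}{\Mk}$, and of $\lambda(\xvec)=\frac{r-1}{2}\cdot\frac{\Mtkx}{\Mkx}$ via $\Mkx=\bigl((kr-k-r)n+r\bigr)/(r-1)$ is correct and is precisely the bookkeeping the authors leave to the reader.

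There is, however, one step I would not let pass as written. You argue that positivity of $kr-k-r$ ``forces $k\geq 2$, hence $kr-k-r\geq r-2\geq 1$.'' But $k$ is the \emph{average} degree, not an integer: positivity of $kr-k-r$ only gives $k>r/(r-1)$, and since $(kr-k-r)n=(r-1)kn-rn$ is a positive integer, $kr-k-r$ can be as small as $1/n$. Your domination of the second and third summands of $\beta$ genuinely uses a lower bound on $kr-k-r$: for instance, the ratio of the middle term $t\,\kmax^3/\Mkx^2$ to the target $r^5\kmax^3/((kr-k-r)n)$ is of order $\bigl(r^4(kr-k-r)\bigr)^{-1}$, which is unbounded (and indeed $\beta\not\to 0$) in a regime such as $r=3$, $\kmax=2$, $(kr-k-r)n=\Theta(\sqrt{n}\,)$ --- a regime permitted by the stated hypothesis $r^5\kmax^3=o((kr-k-r)n)$. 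So either you need a correct justification that $kr-k-r$ is bounded below by an absolute constant (which does hold whenever $k\geq 2$, e.g.\ if all $k_i\geq 2$ or $k$ is an integer), or the claim needs that as an extra hypothesis. To be fair, this is a defect you share with the paper, whose omitted proof must rest on the same unproved domination; but since you chose to write the step out, the inference ``$kr-k-r>0$ and $r\geq 3$ imply $k\geq 2$'' is the one concrete error to fix.
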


 Define $\tau^{(r)}_{\kvec}(\xvec)$ as the number of $r$-hypertrees with degree sequence $\xvec$ in a hypergraph in $\Hrk$. Hence, using Corollary~\ref{hypertreeassubhypergraph} and linearity of expectation, we have 
\begin{align*}
 \E \,\tau^{(r)}_{\kvec} (\xvec)
&= \sum\limits_{T \in \mathcal{T}_{\xvec}}  \P (T) \notag \\
%&= \sum\limits_{T \in \mathcal{T}_{\xvec}}
%\left\{
%\frac{(kn/r)_{t} \, r!^{t}\, \prod_{i=1}^{n} (k_i)_{x_i} }{(kn)_{rt}}\,
  %\exp\left(\lambda_0 -  \lambda(\xvec)+  O\left( \frac{r^5 \kmax^3}{(kr-k-r)n} \right)\right)
%\right\}.
%\end{align*}
%Noting that none of the terms in this equation depends on the edges of $T$, so we take the sum over all elements of $\mathcal{T}_{\xvec}$ to get
%\begin{align*}
%\E \,\tau^{(r)}_{\kvec} (\xvec) 
&=
 \frac{(kn/r)_{t} \, r!^{t}\, \prod_{i=1}^{n} (k_i)_{x_i} \, |\mathcal{T}_{\xvec}| }{(kn)_{rt}}\,
  \exp\left(\lambda_0 -  \lambda(\xvec)+  O\left( \frac{r^5 \kmax^3}{(kr-k-r)n} \right) \right) %\, \notag  \\
\end{align*}
since the formula from Corollary~\ref{hypertreeassubhypergraph} depends only on $\xvec$ and not
on the edges of $T$.   Applying (\ref{Tx}) gives
\begin{align*}
& \E \,\tau^{(r)}_{\kvec} (\xvec)\\
&=  \frac{(kn/r)_{t} \, r^t \,(r-1)\, (n-2)! }{(kn)_{rt}} 
 \left(\prod_{i=1}^{n}\dfrac{ (k_i)_{x_i}}{(x_i -1)!}\right)
\exp\left(\lambda_0 -  \lambda(\xvec)+  O\left( \frac{r^5 \kmax^3}{(kr-k-r)n} \right) \right).
\end{align*}
Now, we multiply and divide by $\binom{(k-1)n}{t-1}$ and rearrange, then sum over all possible 
suitable degree sequences $\xvec$, to obtain
\begin{align}\label{tauk}
\E \, \tau^{(r)}_{\kvec} &=   D^{(r)}_{\kvec} \,
\sum\limits_{\xvec}  \left\{
 \left(\prod_{i=1}^{n} \binom{k_i -1}{x_i -1}\right) / \binom{(k-1)n}{t-1} \exp\left( 
g(\xvec) + O\left( \frac{r^5 \kmax^3}{(kr-k-r)n} \right)
\right) \right\}
 ,
\end{align}
where 
\[
D^{(r)}_{\kvec} =  \frac{(kn/r)_{t} \, r^t \,(r-1)\,  (n-2)! \, \hat{k}^n }{(kn)_{rt}}  \quad \binom{(k-1)n}{t-1}
\, \quad \text{ and } \quad \,
g(\xvec)=\lambda_0 - \lambda(\xvec).
\]

Next, we work on $D^{(r)}_{\kvec}$.  By definition of $t$,
\begin{align*}
D^{(r)}_{\kvec} 
&=  \frac{r^t \,  \hat{k}^n \, n! \, (kn/r)!\,  ((k-1)n)!}{n \, (\nfrac{kn-rt}{r})! \, (kn)! \,  \quad t!}\\
%\end{align*}
%Applying Stirling's formula $n!= \sqrt{2 \pi n} (n/e)^n$ to estimate the factorials in $A$, we have
%\begin{align*}
%A^{(r)}_{\kvec}
&= \sqrt{\frac{k-1}{t(kn-rt)}} \,
\frac{\hat{k}^n \, n^{kn/r} \, (k-1)^{(k-1)n}}{k^{(kr-k)n/r}  \, (kn-rt)^{kn/r-t} \,\, t^t} \exp\left(O\left(\frac{r}{kn-rt}\right)\right)
\end{align*}
using Stirling's formula.
Now $kn-rt = t (kr-k-r) \, e^{O(k/t)}$ and $t^{-1} = \nfrac{r-1}{n}\, e^{O(1/n)}$, so
%\begin{align*}
%A^{(r)}_{\kvec}&= \frac{(k-1)^{1/2}}{t \, (kr-k-r)^{1/2}} 
%\frac{ \hat{k}^n \, n^{kn/r} \, (k-1)^{(k-1)n}}{k^{(kr-k)n/r} \, t^{kn/r} (kr-k-r)^{kn/r-t}} \exp\left(O\left(\frac{r}{kn-rt}\right)+O\left(\frac{k}{kn-rt}\right)\right).
%\end{align*}
%using $t^{-1} = \nfrac{r-1}{n} e^{O(1/n)}$ 
 %and since $O\left(\frac{r}{kn-rt}\right)+O\left(\frac{k}{kn-rt}\right)=O(\frac{r^5 \, \kmax^3}{(kr-k-r)n})$ implies that
\begin{align}
D^{(r)}_{\kvec} 
&= F^{(r)}(k,\hat{k})\,\, \exp\left( O\left(\frac{r+k}{kn-rt}\right)\right).
\label{Akr}
\end{align}
Hence, since the error term in (\ref{Akr}) is dominated by the error term in (\ref{tauk}),
\begin{align*}
\E \, \tau^{(r)}_{\kvec} &=   F^{(r)} (k,\hat{k}) \,
\sum\limits_{\xvec}  \left\{
 \left(\prod_{i=1}^{n} \binom{k_i -1}{x_i -1}\right) / \binom{(k-1)n}{t-1} \exp\left( 
g(\xvec) + O\left( \frac{r^5 \kmax^3}{(kr-k-r)n} \right)
\right) \right\}.
\end{align*}

we only need to find the sum over $\xvec$ in this expression in order to prove the main result. 
This sum can be estimated using a similar approach to~\cite{GIKM17}.

 First, a slight generalisation of~\cite[Lemma 5.1]{GIKM17} is stated below, in our notation.

\begin{lemma}\emph{\cite[Lemma 5.1]{GIKM17}} \label{tauk2}
Partition $[(k-1)n]$ into $n$ sets $A_1, \ldots, A_{n}$, where $|A_i| = k_i -1$ for $i=1,\ldots, n$. 
Let $C$ be a subset of $[(k-1)n]$ of size $t-1$, chosen uniformly at random. 
Define a random vector $\X=\X(C)=(X_1, \cdots, X_n)$ by $X_j=|A_j \cap C| +1$. Then
\begin{align*}
\E \tau^{(r)}_{\kvec} =   F^{(r)}(k,\hat{k}) \,\, \E\exp\left(g(\X) +O\left( \frac{r^5 \kmax^3}{(kr-k-r)n} \right) \right).
\end{align*}
\end{lemma}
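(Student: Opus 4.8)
The goal is to rewrite the sum over suitable degree sequences $\xvec$ in the expression for $\E\,\tau^{(r)}_{\kvec}$ as an expectation over a random vector $\X$ arising from a hypergeometric-type sampling process. The plan is to set up a bijective correspondence between suitable degree sequences $\xvec$ and the possible values of the random vector $\X(C)$ defined in the statement, and then check that the ratio of binomial coefficients appearing in \eqref{tauk} is exactly the probability that $\X(C)$ takes the value $\xvec$.

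First I would recall that a suitable degree sequence satisfies $1 \le x_i \le k_i$ and $\sum_{i=1}^n x_i = rt$. Writing $y_i = x_i - 1$, this becomes $0 \le y_i \le k_i - 1$ and $\sum_{i=1}^n y_i = rt - n = r\cdot\frac{n-1}{r-1} - n = \frac{n-1}{r-1} - 1 = t - 1$. So suitable degree sequences correspond bijectively to ways of choosing a multiset of sizes $(y_1,\ldots,y_n)$ with $y_i \le |A_i| = k_i - 1$ summing to $t-1$. Next, with $C$ a uniformly random $(t-1)$-subset of $[(k-1)n]$ (note $|[(k-1)n]| = (k-1)n = \sum_i (k_i - 1)$, which is consistent), the vector $\X = \X(C)$ with $X_j = |A_j \cap C| + 1$ ranges exactly over the suitable degree sequences, and the classical formula for the multivariate hypergeometric distribution gives
\[
\P\bigl(\X(C) = \xvec\bigr) = \frac{\prod_{i=1}^n \binom{k_i - 1}{x_i - 1}}{\binom{(k-1)n}{t-1}}.
\]
This is precisely the weight multiplying $\exp(g(\xvec) + O(\cdots))$ in \eqref{tauk}.

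With this identification in hand, the sum over $\xvec$ in the displayed formula for $\E\,\tau^{(r)}_{\kvec}$ becomes
\[
\sum_{\xvec} \P\bigl(\X(C) = \xvec\bigr)\,\exp\!\left(g(\xvec) + O\!\left(\frac{r^5\kmax^3}{(kr-k-r)n}\right)\right)
= \E\exp\!\left(g(\X) + O\!\left(\frac{r^5\kmax^3}{(kr-k-r)n}\right)\right),
\]
since the $O(\cdot)$ error term is uniform over all suitable $\xvec$ (it does not depend on $\xvec$), it can be pulled outside or left inside the expectation without change. Multiplying by the prefactor $F^{(r)}(k,\hat k)$ from the preceding display yields the claimed identity. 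One small point to verify is that every suitable $\xvec$ is in fact attainable as $\X(C)$ for some $C$ — this holds because $y_i \le k_i - 1 = |A_i|$ for each $i$ and $\sum y_i = t - 1 \le \sum |A_i| = (k-1)n$, so a suitable subset $C$ always exists — and conversely every value of $\X(C)$ is suitable, which is immediate from the constraints $0 \le |A_j \cap C| \le |A_j|$ and $\sum_j |A_j \cap C| = |C| = t-1$.

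The main (and essentially only) obstacle is bookkeeping: making sure the arithmetic $rt - n = t - 1$ works out, that $(k-1)n = \sum_i(k_i-1)$ so the partition $A_1,\ldots,A_n$ of $[(k-1)n]$ with $|A_i| = k_i - 1$ genuinely exists, and that the hypergeometric probability formula is applied with the correct parameters. There is no hard estimate here — this lemma is purely a reindexing/reinterpretation step that repackages the sum as an expectation so that concentration-of-measure arguments for $\X$ can be applied afterwards; the substantive analytic work (estimating $\E\exp(g(\X))$ via the near-Gaussian behaviour of $\X$) is deferred to the subsequent lemmas.
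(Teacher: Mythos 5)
Your proposal is correct and is essentially the argument the paper intends: the paper derives the displayed formula \eqref{tauk} and then states this lemma as an immediate consequence (citing \cite[Lemma 5.1]{GIKM17} and omitting the proof), the point being exactly your observation that $\prod_i\binom{k_i-1}{x_i-1}\big/\binom{(k-1)n}{t-1}$ is the multivariate hypergeometric probability $\P(\X(C)=\xvec)$, so the sum over suitable $\xvec$ is the stated expectation. Your bookkeeping ($rt-n=t-1$, $\sum_i(k_i-1)=(k-1)n$, uniformity of the error bound over $\xvec$) is all accurate.
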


The expectation of $\exp({g(\X)})$ can be easily determined by computing $e^{\E g(\X)}$ after proving that $\E(e^{g(\X)}) \sim e^{\E g(\X)}$. This can be done with the assistance of \cite[Corollary 2.2]{GIKM17}, restated below. \purple{Note that this is not an asymptotic result but gives an explicit bound for given values of $N$, $s$ and a given function $h$.}

\begin{lemma}\emph{\cite[Corollary 2.2]{GIKM17}}\label{Eetofn}
Let $\binom{[N]}{s}$ be the set of $s$-subsets of $\{1,\ldots, N\}$ and let $h: \binom{[N]}{s} \rightarrow \mathbb{R}$ be given. Let $C$ be a uniformly random element of $\binom{[N]}{s}$. Suppose that, for any $A, A^' \in \binom{[N]}{s}$ with $s-1$ elements in common,  there exists $\alpha \geq 0$ such that
\[
| h(A)-h(A^')| \leq \alpha.
\]
Then 
\begin{align}
\E \exp( h(C)) = \exp \left( \E h(C) + K\right),
\end{align}
where $K$ is a real constant such that $0 \leq K \leq \nfrac{1}{8} \,\min\{s, N-s \} \alpha^2$. 
Furthermore, for any real $z >0$,
\begin{align*}
\Pr(|h(C) - \purple{\E h(C)}| \geq z) \leq \exp \left(\frac{- 2 z^2}{\purple{\min\{s,N-s\}} \alpha^2}\right)
\end{align*}
\end{lemma}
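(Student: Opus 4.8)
The plan is to prove both conclusions from a single object, the Doob martingale obtained by exposing the sampled labels one at a time, and then to read off (i) the two-sided estimate $0\le K\le \tfrac18\min\{s,N-s\}\alpha^2$ and (ii) the tail bound by the Azuma--Hoeffding method.

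First I would realise the uniform random $s$-subset $C$ through a uniformly random permutation $\pi$ of $[N]$, setting $C=\{\pi(1),\ldots,\pi(s)\}$; this is indeed uniform on $\binom{[N]}{s}$. Let $\mathcal{F}_i=\sigma(\pi(1),\ldots,\pi(i))$ and define the Doob martingale $Z_i=\E[h(C)\mid \mathcal{F}_i]$, so that $Z_0=\E h(C)$ and $Z_s=h(C)$, the set $C$ being determined once its first $s$ positions are exposed.

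The key step, and the one I expect to be the main obstacle, is the bounded-difference estimate $|Z_i-Z_{i-1}|\le\alpha$. Conditioning on $\mathcal{F}_{i-1}$, the next value $\pi(i)$ is uniform over the $N-i+1$ not-yet-used labels, and $Z_{i-1}$ is the average of $Z_i$ over these choices, so $|Z_i-Z_{i-1}|$ is at most the oscillation of $Z_i^{(v)}:=\E[h(C)\mid \mathcal{F}_{i-1},\pi(i)=v]$ as $v$ ranges over its possible values. To bound that oscillation by $\alpha$ I would, for any two candidate labels $a,b$, compose the completion $\sigma$ of $\pi$ with the fixed transposition $\tau_{ab}$ of the labels $a$ and $b$; this is a measure-preserving bijection between the completions having $\pi(i)=a$ and those having $\pi(i)=b$. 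Tracking where $b$ lands shows that the two resulting sets $C_a$ and $C_b$ either coincide (when $b$ was already chosen) or differ by the single swap $a\leftrightarrow b$ and thus share $s-1$ elements; in either case the hypothesis gives $|h(C_a)-h(C_b)|\le\alpha$ pointwise. Averaging over the coupled completions yields $|Z_i^{(a)}-Z_i^{(b)}|\le\alpha$, hence $|Z_i-Z_{i-1}|\le\alpha$. Making this reduction from conditioned expectations to a single swap fully rigorous through the transposition coupling, and checking the two cases, is the delicate point.

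Given the bounded differences the remaining steps are routine. For $K$: Jensen's inequality gives $\log\E\exp(h(C))\ge\E h(C)$, so $K\ge 0$; for the upper bound I would apply Hoeffding's lemma to each increment conditionally, each having conditional mean $0$ and conditional range at most $\alpha$, obtaining $\E[\exp(Z_i-Z_{i-1})\mid\mathcal{F}_{i-1}]\le\exp(\alpha^2/8)$, and then multiplying through the filtration to get $\E\exp(h(C)-\E h(C))\le\exp(s\alpha^2/8)$, i.e.\ $K\le\tfrac18 s\alpha^2$. Running the same argument on the complementary martingale that exposes the $N-s$ unchosen labels gives $K\le\tfrac18(N-s)\alpha^2$, and taking the smaller bound yields $\tfrac18\min\{s,N-s\}\alpha^2$. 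Finally, for the tail estimate I would use the scalar version $\E\exp(\lambda(h(C)-\E h(C)))\le\exp(\lambda^2 m\alpha^2/8)$ with $m=\min\{s,N-s\}$, apply Markov's inequality and optimise over $\lambda>0$ (taking $\lambda=4z/(m\alpha^2)$) to obtain the one-sided bound $\exp(-2z^2/(m\alpha^2))$; since $-h$ satisfies the same swap hypothesis, the same argument controls the lower tail, giving the stated estimate.
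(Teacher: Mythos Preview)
The paper does not supply its own proof of this lemma; it is quoted from \cite[Corollary~2.2]{GIKM17} and used as a black box. Your argument via the Doob martingale of $h(C)$ along the exposure of a uniformly random permutation, with the transposition coupling to establish the $\alpha$-bounded conditional range of each increment and then Hoeffding's lemma for the moment-generating bound and Azuma--Hoeffding for the tail, is the standard route and is essentially how the result is obtained in the cited source. One minor wrinkle: summing your two one-sided tail estimates yields $2\exp\bigl(-2z^2/(m\alpha^2)\bigr)$ rather than $\exp\bigl(-2z^2/(m\alpha^2)\bigr)$ for the two-sided deviation; the statement as recorded here omits the factor of~$2$, but this discrepancy is immaterial for the asymptotic applications in the paper.
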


 Two suitable degree sequences $\xvec, \xvec^'$ are \emph{adjacent} if they are different in two entries $i,j$ such that $x_i' = x_i +1, \, x_j' = x_j -1$.  These sequences, respectively, correspond to two sets $A,A' \in \binom{[(k-1)n]}{t-1}$ with $t-2$ vertices in common.

\begin{lemma}\label{Eegx}
  \[
\E\, e^{g(\X)} = \exp\left(\E\, g(\X) +O\left(\frac{ r^3\, \kmax^2}{(kr-k-r)^2\, n}\right) \right).
\]
\end{lemma}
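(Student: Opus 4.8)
The plan is to apply Lemma~\ref{Eetofn} (the concentration bound from \cite{GIKM17}) with $N = (k-1)n$, $s = t-1$, and $h = g$, where $g$ acts on $\binom{[(k-1)n]}{t-1}$ via the identification $C \mapsto \X(C)$ described in Lemma~\ref{tauk2}. Concretely, $g(\X) = \lambda_0 - \lambda(\X)$ and $\lambda_0$ is a constant independent of $\X$, so the only task is to control how much $\lambda(\X)$ can change when we swap one element of $C$, i.e.\ when we move to an adjacent suitable degree sequence. Once we have a Lipschitz constant $\alpha$ for $g$ under such single-element swaps, Lemma~\ref{Eetofn} immediately gives $\E\,e^{g(\X)} = \exp(\E\,g(\X) + K)$ with $0 \le K \le \tfrac18 \min\{t-1,\,(k-1)n - t + 1\}\,\alpha^2 \le \tfrac18 (t-1)\,\alpha^2$, and it remains to check that $(t-1)\alpha^2 = O\bigl(r^3 \kmax^2 / ((kr-k-r)^2 n)\bigr)$.

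The first step is to bound $\alpha$. If $\xvec, \xvec'$ are adjacent, differing in coordinates $i,j$ with $x_i' = x_i+1$, $x_j' = x_j - 1$, then
\[
\lambda(\xvec) - \lambda(\xvec') = \frac{(r-1)^2}{2(kr-k-r)n + 2r}\Bigl( (k_i - x_i)_2 - (k_i - x_i - 1)_2 + (k_j - x_j)_2 - (k_j - x_j + 1)_2 \Bigr).
\]
Since $(a)_2 - (a-1)_2 = 2(a-1)$, each of the two bracketed differences is $O(\kmax)$ in absolute value (using $0 \le k_i - x_i \le k_i \le \kmax$ and similarly for $j$), so the whole bracket is $O(\kmax)$. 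Hence
\[
|g(\xvec) - g(\xvec')| = |\lambda(\xvec) - \lambda(\xvec')| = O\!\left( \frac{r^2 \kmax}{(kr-k-r)n} \right),
\]
and we may take $\alpha = c\, r^2 \kmax / ((kr-k-r)n)$ for a suitable absolute constant $c$; here I am also using that $2(kr-k-r)n + 2r \ge (kr-k-r)n$ up to constants, valid since $r = O((kr-k-r)n)$ under the hypothesis $r^5 \kmax^3 = o((kr-k-r)n)$.

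The second step is to plug this into the bound on $K$. Using $t - 1 \le t = \tfrac{n-1}{r-1} \le n$, we get
\[
K \le \tfrac18 (t-1)\,\alpha^2 = O\!\left( n \cdot \frac{r^4 \kmax^2}{(kr-k-r)^2 n^2} \right) = O\!\left( \frac{r^4 \kmax^2}{(kr-k-r)^2 n} \right).
\]
This is slightly weaker than the claimed error term $O(r^3 \kmax^2 / ((kr-k-r)^2 n))$ by a factor of $r$, so the sharper bound must come from being more careful: rather than $t - 1 \le n$, one should use $t = \tfrac{n-1}{r-1}$ honestly, giving $(t-1)\alpha^2 = O\bigl(\tfrac{n}{r} \cdot \tfrac{r^4\kmax^2}{(kr-k-r)^2 n^2}\bigr) = O\bigl(\tfrac{r^3 \kmax^2}{(kr-k-r)^2 n}\bigr)$, which is exactly the target. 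So the key point is to retain the factor $1/(r-1)$ from $t$ when estimating $\min\{s, N-s\}\,\alpha^2$.

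The main obstacle is really just bookkeeping with the many parameters: one has to verify that the hypothesis $r^5\kmax^3 = o((kr-k-r)n)$ is strong enough to justify all the simplifications (e.g.\ $2(kr-k-r)n + 2r \asymp (kr-k-r)n$, and that $\kmax \le k_i$ comparisons behave as claimed), and to confirm that the adjacency notion for degree sequences really does correspond to $s$-sets sharing $s-1$ elements, so that Lemma~\ref{Eetofn} applies verbatim. There is no analytic difficulty: everything reduces to the elementary identity $(a)_2 - (a-1)_2 = 2(a-1)$ together with the explicit constant in \cite[Corollary~2.2]{GIKM17}. (The concentration tail bound in Lemma~\ref{Eetofn} is not needed here, though it would be needed if one wanted to control contributions of atypical $\xvec$ separately; for this lemma the first-moment-style statement $\E\,e^g = e^{\E g + K}$ suffices.)
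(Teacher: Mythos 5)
Your proposal is correct and follows essentially the same route as the paper: bound the one-swap Lipschitz constant of $g$ via the identity $(a)_2-(a-1)_2=2(a-1)$ to get $\alpha=O\bigl(r^2\kmax/((kr-k-r)n)\bigr)$, then apply Lemma~\ref{Eetofn} with $N=(k-1)n$, $s=t-1$ and use $t=O(n/r)$ to obtain $K=O\bigl(r^3\kmax^2/((kr-k-r)^2 n)\bigr)$. The refinement you identify at the end (keeping the factor $1/(r-1)$ from $t$ rather than bounding $t\le n$) is exactly the step the paper uses, phrased there as $rt=O(n)$.
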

\begin{proof}
For adjacent suitable degree sequences $\xvec, \xvec'$  and from definition of $g(\xvec)$ we have
\begin{align*}
|g(\xvec) - g(\xvec')| &= |\lambda(\xvec) -\lambda(\xvec')| \\
&=\frac{(r-1)^2|2(k_i-x_i-1) - (k_j-x_j)|}{2(kr-k-r)n+2r} \\
&=O\left(\frac{r^2\, \kmax }{(kr-k-r)n}\right).
\end{align*}
Therefore we can apply Lemma~\ref{Eetofn} where
\[h(C) =g(\X(C)), \quad N=(k-1)\, n , \quad  s =t-1 \mbox { and } \alpha=O\Big(\frac{r^2 \, \kmax}{(kr-k-r)n}\Big).\]
Since $ N-s =  (kr-k-r)t + 1 >  s$ \purple{and $rt=O(n)$}, we have
\[
K = O\left(\frac{t\, r^4\kmax^2}{(kr-k-r)^2\, n^2}\right)=O\left(\frac{ r^3\, \kmax^2}{(kr-k-r)^2\, n}\right).
\]
This completes the proof.  
\end{proof}

The distribution of \purple{the vector $\X$ from} Lemma~\ref{tauk2} is called a multivariate hypergeometric distribution with parameters $(t-1,\kvec)$ as defined in~\cite[equation (39.1)]{JKB97}. Therefore, for non-negative integers $a,b$ and using \cite[equation (39.6)]{JKB97} we can compute the expectation of $(X_j -1)_a$ as
\begin{align}\label{EX-1}
\E \,( (X_j -1)_a) = \frac{(t-1)_{a}}{((k-1)n)_{a}} \,(k_j -1)_a.
\end{align}

We use this expression to estimate $\E \, (g(\X))$ as follow.
\begin{lemma}\label{Egx}
\begin{align*}
 \E \, g(\X) = \frac{kr -r-1 }{2 (k-1) } -  
\frac{kr-r-2k+1}{2k(k-1)^2 n} \sum\limits_{i=1}^{n} (k_i -k)^2 
  +O\left(\frac{r \, \kmax}{kn}\right).  
\end{align*}
\end{lemma}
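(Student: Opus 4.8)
The plan is to write $\E\,g(\X) = \lambda_0 - \E\,\lambda(\X)$, note that $\lambda_0$ is deterministic, and evaluate $\E\,\lambda(\X)$ exactly from the moment identity~(\ref{EX-1}). First I would simplify the constant in $\lambda(\xvec)$: since $rt=\nfrac{r(n-1)}{r-1}$ one checks $2(kr-k-r)n+2r = 2(r-1)(kn-rt)$, so $\lambda(\xvec)=\nfrac{(r-1)}{2(kn-rt)}\sum_{i=1}^n (k_i-x_i)_2$, which has the same shape as $\lambda_0=\nfrac{(r-1)}{2kn}\sum_{i=1}^n (k_i)_2$.

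Next, set $m_i=k_i-1$ and $Y_i=X_i-1=|A_i\cap C|$. The polynomial identity $(m_i-Y_i)_2=(m_i)_2-2(m_i-1)(Y_i)_1+(Y_i)_2$, together with $(m_i-1)m_i=(m_i)_2$, gives upon taking expectations via~(\ref{EX-1})
\[\E\,(k_i-X_i)_2=(m_i)_2\left(1-\frac{2(t-1)}{(k-1)n}+\frac{(t-1)_2}{((k-1)n)_2}\right).\]
A short computation collapses the parenthesis to $\nfrac{((k-1)n-(t-1))_2}{((k-1)n)_2}$, and $(k-1)n-(t-1)=kn-rt$ because $(r-1)t=n-1$; equivalently, this is just~(\ref{EX-1}) applied to the complementary random $(kn-rt)$-subset $[(k-1)n]\setminus C$. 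Hence
\[\E\,\lambda(\X)=\frac{(r-1)(kn-rt-1)}{2(k-1)n\,((k-1)n-1)}\sum_{i=1}^n(k_i-1)(k_i-2).\]

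Then I would isolate the main terms. Using $\nfrac{kn-rt-1}{(k-1)n-1}=\nfrac{kn-rt}{(k-1)n}-\nfrac{t-1}{(k-1)n((k-1)n-1)}$ and $(r-1)(kn-rt)=(kr-k-r)n+r$, the coefficient of $\sum_i(k_i-1)(k_i-2)$ in $\E\,\lambda(\X)$ is $c_0+c_1$ with $c_0=\nfrac{(kr-k-r)}{2(k-1)^2n}$ and $|c_1|=O\big(\nfrac{r}{(k-1)^2n^2}\big)$. Substituting $\sum_i(k_i-1)(k_i-2)=\sum_i(k_i-k)^2+(k-1)(k-2)n$ and $\sum_i(k_i)_2=\sum_i(k_i-k)^2+k(k-1)n$ into $\E\,\lambda(\X)$ and $\lambda_0$, the $n$-free parts combine, via $(r-1)(k-1)^2-(kr-k-r)(k-2)=kr-r-1$, to $\nfrac{kr-r-1}{2(k-1)}$, and the coefficients of $\sum_i(k_i-k)^2$ combine, via $(r-1)(k-1)^2-k(kr-k-r)=-(kr-r-2k+1)$, to $-\nfrac{kr-r-2k+1}{2k(k-1)^2n}$ --- exactly the stated main terms.

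The main obstacle is showing that the leftover, $c_1\sum_i(k_i-1)(k_i-2)$, is $O\big(\nfrac{r\kmax}{kn}\big)$ and not merely the weaker $O\big(\nfrac{r\kmax}{n}\big)$ a crude bound would give: since $\sum_i(k_i-1)(k_i-2)$ can be of order $\kmax^2n$, one must instead use $\sum_i(k_i-1)(k_i-2)\le\sum_ik_i^2\le\kmax\sum_ik_i=k\kmax n$, note that $kr-k-r>0$ with $r\ge3$ forces $k\ge2$, and exploit $\nfrac{k}{(k-1)^2}=O(\nfrac1k)$; combined with $(r-1)(t-1)=n-r$ and $(k-1)n-1\ge\nfrac n2$ this gives $|c_1|=O\big(\nfrac{r}{(k-1)^2n^2}\big)$ and hence $c_1\sum_i(k_i-1)(k_i-2)=O\big(\nfrac{r\kmax}{kn}\big)$, finishing the proof.
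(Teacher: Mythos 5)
Your proposal follows essentially the same route as the paper: the same decomposition $g=\lambda_0-\lambda(\xvec)$, the same application of the hypergeometric moment identity (\ref{EX-1}) to each term of $(k_i-X_i)_2$, the same collapse of the resulting bracket to $(kn-rt)(kn-rt-1)/((k-1)n)_2$, and the same final substitution of $\sum_i(k_i)_2$ and $\sum_i(k_i-1)_2$ in terms of $\sum_i(k_i-k)^2$; all of your algebraic identities check out. The one blemish is the claim that $kr-k-r>0$ with $r\ge 3$ ``forces $k\ge 2$'' --- since $k$ is an average it is only forced to exceed $r/(r-1)$ --- but this affects only the error-term bookkeeping, which you in fact treat more carefully than the paper does.
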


\begin{proof}
Recall $g(\xvec)=\lambda_0 -\lambda(\xvec)$, where $\lambda_0$ and $\lambda(\xvec)$ are
defined in Corollary~\ref{hypertreeassubhypergraph}. 
We restate  $\lambda(\X)$ as
\begin{align}
\lambda(\X) 
&=  \frac{r-1}{2(kn-rt)} \sum\limits_{i=1}^{n} \Big(  (k_i -1)_2 -2 (k_i -2)  (X_i -1) +  (X_i -1)_2 \Big) .
\label{lambdaX}
\end{align}
Applying (\ref{EX-1}) on the expected value of the summand in (\ref{lambdaX}) implies
%Then, using (\ref{EX-1}) gives the following expectation
\begin{align}\label{lambda}  
(k_i -1)_2 -2 (k_i -1)_2 \frac{t-1}{(k-1)n} +(k_i -1)_2 \frac{(t-1)_2}{((k-1)n)_2}.
\end{align}
Taking out a common factor and using the identity $t-1=rt-n$,  (\ref{lambda}) can be rewritten as
\begin{align*}
&\frac{(k_i -1)_2}{((k-1)n)_2}\Big( ((k-1)n)_2 -2 (rt-n)  (kn-n-1) +(rt-n)_2 \Big)\\
%&= \frac{(k_i -1)_2}{((k-1)n)_2}\\
%&\{ (kn)^2 -\bcancel{2kn^2} -kn \cancel{+n^2} \xcancel{+n} -2 rtkn \cancel{+2rtn} \cancelto{rt}{+2rt} +\bcancel{2kn^2} \cancel{-2n^2} \xcancel{-2n} + (rt)^2\cancel{-2rtn}\cancel{-rt} \cancel{+n^2}\xcancel{+n}
% \}\\
% &= \frac{(k_i -1)_2}{((k-1)n)_2}\{ kn(kn-rt-1)- rt(kn -rt -1)\} 
&=  \frac{ (kn-rt) (kn-rt-1)\, (k_i -1)_2}{((k-1)n)_2}.
\end{align*}
Substuting this into (\ref{lambdaX}), the expected value of $\lambda (\X)$ is 
\begin{align*}
\frac{(r-1)(kn-rt-1)}{2 ((k-1)n)_2 } \sum_{i=1}^{n} (k_i -1)_2 
=  \frac{ kr-k-r}{2 (k-1)^2 n} \sum\limits_{i=1}^{n} (k_i -1)_2 +O\left( \frac{r \kmax}{kn}\right).
\end{align*}
As a result, the expectation of $g(\X)$ is 
\begin{align}\label{Egx1}
\E g(\X) = \frac{r-1}{2kn} \sum\limits_{i=1}^{n} (k_i )_2 -    \frac{ kr-k-r}{2 (k-1)^2 n} \sum\limits_{i=1}^{n} (k_i -1)_2 +O\left( \frac{r \kmax}{kn}\right).
\end{align}

The first sum in this equation can be written as
%\begin{align*}
 %\sum\limits_{i=1}^{n} \{ 2k \, k_i - k^2 - k_i + (k_i -k)^2\} =
$ k (k-1)n +\sum_{i=1}^{n} (k_i -k)^2$,
%\end{align*}
while  the second sum is
%\begin{align*}
 $%\sum\limits_{i=1}^{n}  \{ 2k k_i -k^2 -3k_i +2+(k_i -k)^2 \}  =
 (k-1) (k-2) n+ \sum_{i=1}^{n} (k_i -k)^2$.
%\end{align*}
Substituting these  into (\ref{Egx1}) and \purple{simplifying} the result will complete the proof of this lemma.
%\begin{align*}
%\E g(\X) &=\left\{    \frac{(r-1) (k-1)^2- (kr-k-r)   (k-2) }{2 (k-1)} \right\}  
 %+\left\{ \frac{r-1}{2kn} -    \frac{ kr-k-r}{2 (k-1)^2 n} \right\} \sum\limits_{i=1}^{n} (k_i -k)^2 
%+O\left( \frac{r \kmax}{kn}\right) \\
 %&=
 %\frac{k^2 r-2kr+r-k^2+2k-1-k^2r+k^2+kr+2kr-2k-2r }{2 (k-1)}  
 %\\
 %&+\left\{  \frac{k^2r-2kr+r -k^2+2k-1\,\, -k^2r+k^2+kr}{2 k(k-1)^2 n} \right\} \sum\limits_{i=1}^{n} (k_i -k)^2 
%+O\left( \frac{r \kmax}{kn}\right)\\
%&=
 %\frac{kr-r-1 }{2 (k-1)}   
 %- \frac{kr-r -2k+1}{2 k(k-1)^2 n}  \sum\limits_{i=1}^{n} (k_i -k)^2 +O\left( \frac{r \kmax}{kn}\right)\\
%\end{align*}
\end{proof}
 {\bf Proof of Theorem~\ref{Espanninghypertree}:}
 
 \begin{proof}
Substitution  from Lemma~\ref{Eegx} and Lemma~\ref{Egx} into the expression of Lemma~\ref{tauk2} proves the required result, with combined error term
\begin{align*}
O\left(\frac{r^5 \kmax^3}{(kr-k-r)n}+ \frac{r^3 \kmax^2}{(kr-k-r)^2 n} + \frac{r \kmax}{kn} \right)
=O\left( \frac{r^5 \kmax^3}{(kr-k-r)n} \right).
\end{align*}
\end{proof}

\end{document}